\newtheorem{theorem}{Theorem}[section]
\newtheorem{lemma}[theorem]{Lemma}
\theoremstyle{definition}
\newtheorem{proposition}[theorem]{Proposition}
\newtheorem{example}[theorem]{Example}
\theoremstyle{remark}
\newtheorem{remark}[theorem]{Remark}
\numberwithin{equation}{section}
\begin{document}

\title{Equations defining recursive extensions as set theoretic complete intersections}

\author{Tran Hoai Ngoc Nhan}
\address{Department of Mathematics, Dong Thap University, Viet Nam}
\email{tranhoaingocnhan@gmail.com}

\author{Mesut \c{S}ah\.{i}n}
\address{Department of Mathematics, Hacettepe University, 06800, Ankara, ~ Turkey}
\email{mesutsahin@gmail.com}
\thanks{The second author is supported by T\"{U}B\.{I}TAK-2219}

\subjclass[2010]{14M10; 14M25;14H45.} \keywords{set-theoretic complete
intersections, monomial curves.}

\date{\today}

\begin{abstract}
Based on the fact that projective monomial curves in the plane are complete intersections, we give an effective inductive method for creating infinitely many monomial curves in the projective $n$-space that are set theoretic complete intersections. We illustrate our main result by giving different infinite families of examples. Our proof is constructive and provides one binomial and $(n-2)$ polynomial explicit equations for the hypersurfaces cutting out the curve in question.
\end{abstract}

\maketitle

\section{Introduction}

One of the most important and longstanding
open problems in classical algebraic geometry is to determine the least number of equations needed to define an
algebraic variety. This number which is also known as the arithmetical rank of the variety is bounded below by its codimension and above by the dimension of the ambient space, see \cite{eis}. Algebraic varieties whose arithmetical ranks coincide with their codimensions are called set theoretic complete intersections. Hence, an interesting problem is to ask if a given variety is a set theoretic complete intersection or not. Although there are algorithms for finding minimal generating sets for its ideal, there is no general theory for providing minimal explicit equations defining the variety set theoretically. Therefore a related and more challenging problem is to find codimension many polynomial equations which define a given set theoretic complete intersection. Finding explicit equations for parametrized curves also attracts attention for applications in geometric modeling (see e.g. \cite{goldman, juttler}).

 Let $K$ be an algebraically closed field of any
characteristic and $m_1<\ldots<m_n$ be some positive integers such
that $\textrm{gcd}(m_1,\ldots,m_n)= 1$. Recall that a monomial curve
$C(m_1,m_2,\ldots,m_n)$ in the projective space $\mathbb{P}^n$ over
$K$ is a curve with generic zero $(u^{m_n},\
u^{m_n-m_1}v^{m_1},\ldots,u^{m_n-m_{n-1}}v^{m_{n-1}},\ v^{m_n})$
where $u,v\in K$ and $(u,v)\neq (0,0)$. It
is known that every monomial curve in $\mathbb{P}^n$ is a
set-theoretic complete intersection, where $K$ is of characteristic
$p > 0$, see \cite{hart,moh,bmt-simplicial}. In the characteristic zero case,
there are partial results \cite{OS,SC,SCI} and efficient methods for finding new examples from old, see \cite{eto,kat,N,PS,OT,Tgluing} and the references therein for the current activity.

Even though a monomial curve in $n$-space is defined by either $n-1$ or $n$ equations set theoretically, these equations are given explicitly only in particular situations. Indeed, Moh provided $n-1$ binomial equations defining the curve in question set theoretically in positive characteristic, see \cite{moh}. In characteristic zero case, Thoma proved that this is possible, namely a monomial space curve is given by $2$ binomial equations, only if its ideal is generated by these binomials, see \cite{MSI}.  Three binomial equations cutting out a monomial curve in $\mathbb{P}^3$ is given by Barile and Morales in \cite{bm}. Later, Thoma generalized these by proving that every monomial curve in $n$-space is defined by $n$ binomial equations set theoretically and that $n-1$ binomial equations are sufficient if the curve is an ideal theoretic complete intersection, see \cite{bara}. He also discussed what type of equations would be needed if the monomial curve in $\mathbb{P}^3$ was given by $2$ equations, see \cite{OTE}. Eto, on the other hand, studied in \cite{eto} monomial curves defined by $n-2$ binomials plus a polynomial.

The aim of this paper is to use the fact that monomial plane curves are complete intersections and give an elemantary proof of the fact (due to Thoma \cite{OT}) that their recursive extensions are set theoretic complete intersections under a mild condition. Our main contribution here is to give one binomial and $(n-2)$ non-binomial explicit equations for the hypersurfaces cutting out the curves in question. Our main technique is a combination of the methods of \cite{hart,N} and of \cite{tjm,PS}.

\section{The Main Result}
In this section, we prove our main theorem, which can be used to construct infinitely
many set-theoretic complete intersection monomial curves in
$\mathbb{P}^n$. Throughout the paper, we study monomial curves $C(m_1,m_2,\ldots,m_n)$ in $\mathbb{P}^n$, where $m_i
\in \langle m_1,m_2,\dots,m_{i-1} \rangle$ for every $3\leq i\leq n$, so that $m_i
=a_{i,1}m_1+\cdots+a_{i,i-1}m_{i-1}$ for some nonnegative
integers $a_{i,j}$. Note that each monomial curve $C(m_1,m_2,\ldots,m_i)$ in $\mathbb{P}^i$ is an extension of $C(m_1,m_2,\ldots,m_{i-1})$ in $\mathbb{P}^{i-1}$, for every $3\leq i\leq n$, in the language of \cite{PS,ext}. From now on,  $C \subseteq \mathbb{P}^n$ denotes a monomial curve $C(m_1,\dots,m_n)$ of this form and is referred to as a \textit{recursive extension}. Here is the first observation about these special curves.
\begin{lemma}\label{lem1} If $C \subseteq \mathbb{P}^n$ is a recursive extension and $3\leq i\leq n$, then the following equivalent conditions hold. \\
{\rm (I)} There are non-negative integers $a_i$, $b_i$ and $c_i$ satisfying\\ $m_i=c_im_{i-1}+b_im_2+a_im_1$, together with $m_{i-1}>a_i$, $m_{i-1}>b_i$, and $b_3=0$.\\
{\rm (II)} There are non-negative integers $\alpha_i$, $\beta_i$ and $\gamma_i$ satisfying\\ $m_i
=\gamma_im_{i-1}-\beta_im_2-\alpha_im_1$ together with $m_{i-1}>\alpha_i$, $m_{i-1}>\beta_i$, $\beta_3=0$.
\end{lemma}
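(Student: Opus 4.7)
The plan is to prove condition (I) directly by induction on $i \geq 3$ using Euclidean division modulo $m_{i-1}$, and then to derive (II) from (I) by a sign-flipping algebraic substitution. Since both (I) and (II) will be shown to hold unconditionally, their logical equivalence is automatic.

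For (I), I would first establish an auxiliary claim: every $m_j$ lies in the numerical semigroup $\langle m_1, m_2 \rangle$. This is a one-line induction on $j$, since $m_j = \sum_{k<j} a_{j,k} m_k$ is a non-negative combination of elements already in $\langle m_1, m_2 \rangle$ by the inductive hypothesis. The base case $i = 3$ of (I) is then handled by Euclidean division: writing $m_3 = a_{3,1} m_1 + a_{3,2} m_2$ and setting $a_{3,1} = q m_2 + a_3$ with $0 \leq a_3 < m_2$ gives $m_3 = a_3 m_1 + (a_{3,2} + q m_1) m_2$, so one may take $c_3 = a_{3,2} + q m_1 \geq 0$, $b_3 = 0$, and $a_3 < m_{i-1}$. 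For the inductive step $i \geq 4$, the auxiliary claim yields $m_i = s m_1 + t m_2$ for some $s, t \geq 0$; Euclidean division $s = q_1 m_{i-1} + a_i$ and $t = q_2 m_{i-1} + b_i$ with $0 \leq a_i, b_i < m_{i-1}$ then forces $c_i := q_1 m_1 + q_2 m_2 \geq 0$ and gives the required decomposition $m_i = c_i m_{i-1} + b_i m_2 + a_i m_1$.

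To pass from (I) to (II), I would use the elementary identity $x m = m_{i-1} m - (m_{i-1} - x) m$, applied separately to the $a_i m_1$ and $b_i m_2$ terms whenever the coefficient is positive. Concretely: if $a_i > 0$, set $\alpha_i = m_{i-1} - a_i$ and add $m_1$ to $\gamma_i$; if $a_i = 0$, set $\alpha_i = 0$; symmetrically for $b_i$. This produces $\gamma_i = c_i + \varepsilon_1 m_1 + \varepsilon_2 m_2$ with $\varepsilon_1, \varepsilon_2 \in \{0,1\}$ flagging positivity, and the required inequalities $0 \leq \alpha_i, \beta_i < m_{i-1}$ together with $\beta_3 = 0$ (from $b_3 = 0$) follow at once. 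The main obstacle is essentially book-keeping: handling the four cases of the substitution and tracking the special condition $b_3 = 0$. A minor subtlety is that a naive inversion of the substitution to deduce (I) from (II) gives $c_i = \gamma_i - \varepsilon_1 m_1 - \varepsilon_2 m_2$, which need not be non-negative for an arbitrary (II)-solution (since (II) admits many representations with small $\gamma_i$); however, this is not a real obstacle to the equivalence, since (I) has been proved directly in the first step, and the equivalence of the two existence statements is therefore automatic.
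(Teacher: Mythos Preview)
Your proposal is correct and follows essentially the same route as the paper: establish a non-negative representation $m_i=C_im_{i-1}+B_im_2+A_im_1$, reduce $A_i,B_i$ modulo $m_{i-1}$ by Euclidean division (absorbing the quotients into the $m_{i-1}$-coefficient), and then pass to (II) via the same four-case sign-flip $\alpha_i=m_{i-1}-a_i$ (or $0$), $\beta_i=m_{i-1}-b_i$ (or $0$), with $\gamma_i$ adjusted accordingly. Your auxiliary observation that each $m_j\in\langle m_1,m_2\rangle$ is a clean way to justify what the paper phrases as ``we can easily write $m_i=C_im_{i-1}+B_im_2+A_im_1$'', and your remark that the inverse substitution need not recover a non-negative $c_i$ (so that the ``equivalence'' is really the trivial equivalence of two unconditionally true existence statements) is a point the paper leaves implicit.
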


\begin{proof} In this case, we can easily write $m_i=C_im_{i-1}+B_im_2+A_im_1$ for some nonnegative integers $A_i$,
$B_i$ and $C_i$,  for all $3\leq i\leq n$, where $B_3=0$. Denoting by $\lfloor a \rfloor$ the largest integer less than or equal to
$a$ and setting $a_i=A_i-\lfloor\frac{A_i}{m_{i-1}}\rfloor m_{i-1}, b_i=B_i-\lfloor\frac{B_i}{m_{i-1}}\rfloor m_{i-1}$ and $c_i=C_i+\lfloor\frac{B_i}{m_{i-1}}\rfloor m_2+\lfloor\frac{A_i}{m_{i-1}}\rfloor m_1$, we can further express these integers, for all
$3\leq i\leq n$, as $$m_i=c_im_{i-1}+b_im_2+a_im_1$$ so that
$m_{i-1}>a_i\geq 0$, $m_{i-1}>b_i\geq 0$, $b_3=0$ and $c_i\geq 0$. This proves the first part. As for the second part and for the equivalence, one can use the following formulas:
$$\alpha_i=\left\{
                       \begin{array}{ccc}
                         0 & \text{if} & a_i=0 \\
                         m_{i-1}-a_i & \text{if} & a_i >0,
                       \end{array}
                     \right.
\beta_i=\left\{
                       \begin{array}{ccc}
                         0 & \text{if} & b_i=0 \\
                         m_{i-1}-b_i & \text{if} & b_i >0,
                       \end{array}
                     \right.$$
and $$\gamma_i=\left\{
                       \begin{array}{ccc}
                         c_i & \text{if} & a_i=0 \quad \text{and}\quad  b_i =0,\\
                         c_i+m_2 & \text{if} & a_i=0 \quad \text{and}\quad  b_i >0,\\
                         c_i+m_1 & \text{if} &  a_i >0 \quad \text{and}\quad b_i=0, \\
                          c_i+m_2+m_1 & \text{if} &   a_i >0 \quad \text{and}\quad  b_i >0.
                       \end{array}
                     \right.$$
\end{proof}

\begin{example} Consider the monomial curve $C(1,2,3,5)\subset \mathbb{P}^4$. Then, the integers in Lemma \ref{lem1} are not unique as can be seen below.
\begin{align*}5&=1\cdot3+1\cdot2+0\cdot1=3\cdot3-2\cdot2-0\cdot1\\
&=1\cdot3+0\cdot2+2\cdot1=2\cdot3-0\cdot2-1\cdot1    \hfill {\qed}
\end{align*}
\end{example}

 The following is crutial to prove our main result.

\begin{lemma}\label{lem2} Let $C$ in
$\mathbb{P}^n$ be a recursive extension and $\alpha_i$, $\beta_i$ and $\gamma_i$ are some non-negative integers as in Lemma \ref{lem1}. If $\gamma_i-\beta_i-\alpha_i-1\geq 0$ and $m_1\geq\beta_i (m_2-m_1)$, then $F_{i-1}=x_{i-1}^{m_i}+G_{i-1}+H_{i-1}\in I(C)$ for all $3\leq i\leq n$, where
$$G_{i-1}=\sum_{k=1}^{N_i}(-1)^k {m_{i-1}\choose k}
x_1^{k\alpha_i}x_2^{k\beta_i}x_{i-1}^{m_i-k\gamma_i}
x_i^kx_0^{k(\gamma_i-\beta_i-\alpha_i-1)},$$

$$H_{i-1}=\sum_{k=N_i+1}^{m_{i-1}}(-1)^k{m_{i-1}\choose k}
x_1^{a_i(m_{i-1}-k)}x_2^{b_i(m_{i-1}-k)}x_{i-1}^{c_i(m_{i-1}-k)}x_i^k  x_0^{h_0},$$
$N_i=m_{i-1}-m_1$ and $ h_0=k(a_i+b_i+c_i-1)-b_i(m_{i-1}-m_2)-a_i(m_{i-1}-m_1).$

\end{lemma}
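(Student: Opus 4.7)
The plan is to verify directly that $F_{i-1}$ vanishes on the parametrization of $C$. Write $F_{i-1}$ as a single sum
\[
F_{i-1}=\sum_{k=0}^{m_{i-1}}(-1)^{k}\binom{m_{i-1}}{k}M_{k},
\]
where $M_{0}=x_{i-1}^{m_{i}}$, $M_{k}$ is the monomial of $G_{i-1}$ for $1\le k\le N_{i}$, and $M_{k}$ is the monomial of $H_{i-1}$ for $N_{i}+1\le k\le m_{i-1}$. It then suffices to show that after substituting $x_{0}=u^{m_{n}}$, $x_{j}=u^{m_{n}-m_{j}}v^{m_{j}}$ for $1\le j\le n-1$, and $x_{n}=v^{m_{n}}$, every $M_{k}$ evaluates to the same monomial in $u,v$, so that the alternating binomial sum $\sum_{k=0}^{m_{i-1}}(-1)^{k}\binom{m_{i-1}}{k}=0$ finishes the job.

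The key computation is the $v$-degree. For $M_{k}$ coming from $G_{i-1}$, collecting the $v$-exponents yields
\[
k\alpha_{i}m_{1}+k\beta_{i}m_{2}+(m_{i}-k\gamma_{i})m_{i-1}+km_{i}
= m_{i}m_{i-1}+k\bigl(\alpha_{i}m_{1}+\beta_{i}m_{2}-\gamma_{i}m_{i-1}+m_{i}\bigr),
\]
and the parenthesis vanishes by Lemma \ref{lem1}(II). For $M_{k}$ coming from $H_{i-1}$, the $v$-exponent equals $(m_{i-1}-k)(a_{i}m_{1}+b_{i}m_{2}+c_{i}m_{i-1})+km_{i}$, and this equals $m_{i}m_{i-1}$ by Lemma \ref{lem1}(I). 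A parallel bookkeeping of total degrees shows each $M_{k}$ is a monomial of total degree $m_{i}$, so after substitution the $u$-degree automatically comes out to $m_{i}m_{n}-m_{i}m_{i-1}=m_{i}(m_{n}-m_{i-1})$ for every $k$. Hence all terms collapse to the common monomial $u^{m_{i}(m_{n}-m_{i-1})}v^{m_{i}m_{i-1}}$ and cancel.

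The main technical obstacle is verifying that all exponents appearing in $F_{i-1}$ are genuinely non-negative, since this is where the two hypotheses enter. The hypothesis $\gamma_{i}-\beta_{i}-\alpha_{i}-1\ge 0$ directly makes the $x_{0}$ exponent in $G_{i-1}$ non-negative. The exponent $m_{i}-k\gamma_{i}$ on $x_{i-1}$ in $G_{i-1}$ is monotone in $k$, so one only needs it at $k=N_{i}=m_{i-1}-m_{1}$; using $m_{i}=\gamma_{i}m_{i-1}-\beta_{i}m_{2}-\alpha_{i}m_{1}$ one rewrites this as $(\gamma_{i}-\alpha_{i})m_{1}-\beta_{i}m_{2}$, which is non-negative because $\gamma_{i}-\alpha_{i}\ge \beta_{i}+1$ combines with $m_{1}\ge \beta_{i}(m_{2}-m_{1})$ to give $(\gamma_{i}-\alpha_{i})m_{1}\ge (\beta_{i}+1)m_{1}\ge \beta_{i}m_{2}$. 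For the $H_{i-1}$ range $k\ge N_{i}+1$, the exponents $a_{i}(m_{i-1}-k)$, $b_{i}(m_{i-1}-k)$, $c_{i}(m_{i-1}-k)$ are clearly non-negative, and $h_{0}$ is linear in $k$ with non-negative slope $a_{i}+b_{i}+c_{i}-1\ge 0$ (otherwise $m_{i}\le m_{i-1}$), so checking non-negativity of $h_{0}$ at the endpoint $k=N_{i}+1$ reduces, via the same algebra relating $(a_{i},b_{i},c_{i})$ to $(\alpha_{i},\beta_{i},\gamma_{i})$, to the hypothesis $m_{1}\ge \beta_{i}(m_{2}-m_{1})$.

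Once all exponents are legal and each term substitutes to the same monomial in $u,v$, the identity $(1-1)^{m_{i-1}}=0$ gives $F_{i-1}(u^{m_{n}},u^{m_{n}-m_{1}}v^{m_{1}},\dots,v^{m_{n}})=0$, i.e., $F_{i-1}\in I(C)$.
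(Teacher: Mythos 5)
Your overall strategy coincides with the paper's: check that all exponents are non-negative, then observe that every monomial of $F_{i-1}$ has the same bidegree $(m_i(m_n-m_{i-1}),\,m_im_{i-1})$ under the grading induced by the parametrization, so that $F_{i-1}$ collapses to $\sum_{k=0}^{m_{i-1}}(-1)^k\binom{m_{i-1}}{k}=0$ after substitution. The homogeneity bookkeeping and the treatment of the $x_{i-1}$-exponent in $G_{i-1}$ are correct and essentially identical to the paper's.

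However, there is a genuine gap in your treatment of $h_0$. You correctly reduce to the endpoint $k=N_i+1$ (the slope $a_i+b_i+c_i-1$ is indeed non-negative), but you then assert that non-negativity of $h_0$ there ``reduces to the hypothesis $m_1\ge\beta_i(m_2-m_1)$.'' This is false: that hypothesis plays no role in bounding $h_0$ and cannot carry the argument. For instance, take $m_1=2$, $m_2=3$, $m_{i-1}=5$, $m_i=6=3m_1$, so $a_i=3$, $b_i=c_i=0$; then $\beta_i=0$, so $m_1\ge\beta_i(m_2-m_1)$ holds trivially, yet $h_0=2k-9$ equals $-1$ at $k=N_i+1=4$. (Of course here $\gamma_i-\beta_i-\alpha_i-1=2-0-2-1=-1<0$, so the lemma's other hypothesis fails.) The hypothesis that actually controls $h_0$ is $\gamma_i-\beta_i-\alpha_i-1\ge0$, translated through the Lemma \ref{lem1} dictionary into inequalities such as $b_i-1\ge m_{i-1}-m_2$ when $a_i=c_i=0$ and $b_i>0$, or $a_i-1\ge m_{i-1}-m_1$ when $b_i=c_i=0$ and $a_i>0$. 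Establishing $h_0\ge0$ from these requires a case analysis (first on $c_i>0$ versus $c_i=0$, then on the vanishing of $a_i$ and $b_i$), which is the most technical part of the paper's proof and is entirely absent from your plan. As written, a reader executing your plan would get stuck at exactly this point.
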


\begin{proof} First we prove that $G_{i-1}$ and $H_{i-1}$ are polynomials, i.e. their monomials have non-negative exponents.

For $G_{i-1}$, we only need to check the exponent of $x_{i-1}$. By $k \leq  N_i$, we have $m_i-k\gamma_i \geq m_i- (m_{i-1}-m_1)\gamma_i$. Since  $m_i=\gamma_im_{i-1}-\beta_im_2-\alpha_im_1$ and $\gamma_i  -\alpha_i \geq \beta_i+1$  it follows that $$m_i-k\gamma_i \geq (\gamma_i  -\alpha_i)m_1- \beta_i m_{2} \geq (\beta_i+1)m_1- \beta_i m_{2}=m_1-\beta_i(m_2-m_1).$$
Therefore, $m_i-k\gamma_i \geq 0$ by the hypothesis $m_1\geq\beta_i (m_2-m_1)$.

For $H_{i-1}$, we only check if $h_0 \geq 0$. Since $k\geq N_i+1=m_{i-1}-m_1+1$, $$h_0\geq (m_{i-1}-m_1+1)(c_i+b_i+a_i-1)-b_i(m_{i-1}-m_2+1)-a_i(m_{i-1}-m_1).$$ Thus,
 \begin{equation}\label{eqn1} h_0 \geq (m_{i-1}-m_1+1)(c_i-1)+b_i(m_{2}-m_1+1)+a_i.
\end{equation}
It follows that, $h_0 \geq 0$ as long as $c_i >0$. We now treat the case where $c_i=0$, in which case \eqref{eqn1} becomes
\begin{equation}\label{eqn2} h_0 \geq b_i(m_{2}-m_1+1)+a_i-(m_{i-1}-m_1+1).
\end{equation} Notice first that the assumption $\gamma_i-\beta_i-\alpha_i-1\geq 0$ yields
\begin{equation}\label{eqn3}  b_i-1 \geq (m_{i-1}-m_2)  \quad \text{if} \quad  a_i=0 \quad \text{and}\quad  b_i >0,
\end{equation}
\begin{align*} a_i-1 \geq (m_{i-1}-m_1) \quad &\text{if} \quad  a_i >0 \quad \text{and}\quad b_i=0, \\
 a_i+b_i-1 \geq (m_{i-1}-m_2)+(m_{i-1}-m_1) \quad &\text{if}   \quad a_i >0 \quad \text{and}\quad  b_i >0.
\end{align*}

We see immediately that $h_0\geq 0$ as soon as $a_i>0$. If $a_i=0$, then from \eqref{eqn2} and \eqref{eqn3}, we obtain
 \begin{eqnarray}\label{eqn4}  h_0 &\geq& (m_{i-1}-m_2+2)(m_{2}-m_1+1)-(m_{i-1}-m_2)\\
 \nonumber &>&(m_{2}-m_1+1)(m_{i-1}-m_2)\geq0.
\end{eqnarray}

To accomplish the goal of proving $F_{i-1}\in I(C)$, we make use the fact that $I(C)$ is the kernel of the surjective map defined by
$$\phi: K[x_0,\dots,x_n]\rightarrow K[u^{m_n},\
u^{m_n-m_1}v^{m_1},\ldots,u^{m_n-m_{n-1}}v^{m_{n-1}},\ v^{m_n}],$$ where $\phi(x_i)=u^{m_n-m_i}v^{m_i}$, for $i=0,\dots,n$ with the convention $m_0=0$. Recall that $F\in I(C)=\ker(\phi)$ iff the sum of the coefficients of $F$ is zero and $F$ is homogeneous with respect to the grading afforded by $deg_C(x_i)=(m_n-m_i,m_i)$. It is not difficult to check that the monomials in $F_{i-1}$ have degree $m_i(m_n-m_{i-1},m_{i-1})$ and thus $F_{i-1}$ is homogeneous with respect to this grading. Since $\sum_{k=0}^{m_{i-1}}(-1)^k {m_{i-1}\choose k}=0$, the proof is complete.
\end{proof}

\begin{theorem}\label{dl1} Let $C$ in
$\mathbb{P}^n$ be a recursive extension and $\alpha_i$, $\beta_i$ and $\gamma_i$ are some non-negative integers as in Lemma \ref{lem1}. If $\gamma_i-\beta_i-\alpha_i-1\geq 0$ and $m_1\geq\beta_i (m_2-m_1)$, then $C$ is a set-theoretic complete intersection on $F_1= x_1^{m_2}-x_2^{m_1}x_0^{m_2-m_1}$ and $F_2,\dots,F_{n-1}$ defined in Lemma \ref{lem2}.
\end{theorem}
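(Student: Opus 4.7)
The plan is to show $V(F_1, F_2, \ldots, F_{n-1}) = C$ set-theoretically. The inclusion $C \subseteq V(F_1, F_2, \ldots, F_{n-1})$ follows from Lemma \ref{lem2} together with the direct verification that $F_1 \in I(C)$; only the reverse containment requires work. Given a point $P = (p_0, \ldots, p_n) \in V(F_1, \ldots, F_{n-1})$, I would split the analysis into three cases according to whether $p_0$ and $p_1$ vanish.

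In the main case $p_0 \neq 0$ and $p_1 \neq 0$, I normalize $p_0 = 1$ and pick $v \in K$ with $v^{m_1} = p_1$. The equation $F_1(P) = 0$ becomes $p_1^{m_2} = p_2^{m_1}$, so $p_2 = \zeta v^{m_2}$ for some $\zeta$ with $\zeta^{m_1} = 1$. The crux is to prove, by induction on $i$ from $3$ to $n$, the identity
\[ F_{i-1}\bigl(1, v^{m_1}, \zeta v^{m_2}, \zeta^{c_3'} v^{m_3}, \ldots, \zeta^{c_{i-1}'} v^{m_{i-1}}, p_i, \ldots, p_n\bigr) = \bigl(\zeta^{c_i'} v^{m_i} - p_i\bigr)^{m_{i-1}}, \]
where integers $c_j'$ are defined recursively by $c_1' = 0$, $c_2' = 1$, and $c_j' \equiv b_j + c_j c_{j-1}' \pmod{m_1}$. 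Granting this identity, $F_{i-1}(P) = 0$ forces $p_i = \zeta^{c_i'} v^{m_i}$, completing the induction.

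The identity is obtained by substituting into each monomial of $F_{i-1}$. The $v$-exponent of the $k$-th summand evaluates to $m_i(m_{i-1} - k)$ in both halves $G_{i-1}$ and $H_{i-1}$, by direct use of the two expressions $m_i = \gamma_i m_{i-1} - \beta_i m_2 - \alpha_i m_1 = c_i m_{i-1} + b_i m_2 + a_i m_1$ from Lemma \ref{lem1}. The main obstacle is the $\zeta$-bookkeeping: the $G_{i-1}$-terms contribute $\zeta$-exponent $m_i c_{i-1}' - k(\gamma_i c_{i-1}' - \beta_i)$, the $H_{i-1}$-terms contribute $(m_{i-1} - k)(b_i + c_i c_{i-1}')$, and their equality modulo $m_1$ has to be checked via the four cases of Lemma \ref{lem1}. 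This agreement rests on the auxiliary inductive invariant $m_j \equiv c_j' m_2 \pmod{m_1}$, which I would prove alongside the main induction. Once both exponents are identified, the binomial coefficients $(-1)^k \binom{m_{i-1}}{k}$ assemble $F_{i-1}(P)$ into the claimed $m_{i-1}$-th power.

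Finally, the hypotheses $m_i \in \langle m_1, \ldots, m_{i-1}\rangle$ for $i \geq 3$ together with $\gcd(m_1, \ldots, m_n) = 1$ force $\gcd(m_1, m_2) = 1$, so the $m_2$-power map on the group of $m_1$-th roots of unity is a bijection. Picking $\eta$ with $\eta^{m_1} = 1$ and $\eta^{m_2} = \zeta$, and setting $v' = \eta v$, the recursion $m_i = c_i m_{i-1} + b_i m_2 + a_i m_1$ propagates $(v')^{m_i} = \zeta^{c_i'} v^{m_i} = p_i$ for every $i$, so that $P$ lies on the parameterized curve $C$. The remaining degenerate cases ($p_0 = 0$, or $p_0 \neq 0$ with $p_1 = 0$) are handled by showing inductively from the vanishing of $F_1, \ldots, F_{n-1}$ that all but one coordinate must vanish, yielding the points $(0, \ldots, 0, 1)$ or $(1, 0, \ldots, 0)$ respectively, both of which lie on $C$.
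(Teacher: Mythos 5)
Your main case ($p_0\neq 0$, $p_1\neq 0$) is sound: the invariant $m_j\equiv c_j'm_2 \pmod{m_1}$ together with $\gcd(m_1,m_2)=1$ does give $c_{i-1}'m_i\equiv c_i'm_{i-1}$ and $\gamma_ic_{i-1}'-\beta_i\equiv b_i+c_ic_{i-1}'\pmod{m_1}$, which is exactly what is needed to match the $\zeta$-exponents of the $G_{i-1}$- and $H_{i-1}$-terms and assemble $F_{i-1}(P)$ into $(\zeta^{c_i'}v^{m_i}-p_i)^{m_{i-1}}$. But all of this bookkeeping is avoidable, and the paper avoids it: since $\gcd(m_1,m_2)=1$, write $\ell_1m_2+\ell_2m_1=1$ and replace $v$ by $t=\zeta^{\ell_1}v$ at the base case; then $t^{m_1}=p_1$ and $t^{m_2}=p_2$ hold simultaneously, the induction runs with the single parameter $t$, and each $F_{i-1}$ collapses directly to $(t^{m_i}-x_i)^{m_{i-1}}$ with no roots of unity to track. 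Your end-of-proof repair via $\eta$ achieves the same normalization, only later and at the cost of the auxiliary sequence $c_j'$.

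The genuine gap is the case $p_0=0$. You assert in one sentence that the vanishing of $F_1,\dots,F_{n-1}$ forces $x_1=\dots=x_{n-1}=0$, but this is precisely where the hypotheses $\gamma_i-\beta_i-\alpha_i-1\geq 0$ and $\beta_3=0$ do their real work, and it is not a routine induction. After $F_1=0$ gives $x_1=0$, you must show that every monomial of $G_{i-1}$ and $H_{i-1}$ vanishes at $x_0=x_1=0$, so that $F_{i-1}=0$ reduces to $x_{i-1}^{m_i}=0$. For $H_{i-1}$ this means proving $h_0>0$ (not merely $h_0\geq 0$ as in Lemma \ref{lem2}), which fails in exactly one subcase ($c_i=b_i=0$, $k=N_i+1$, $a_i=m_{i-1}-m_1+1$) where one must instead observe that $x_1$ divides the offending monomial. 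For $G_{i-1}$ the relevant factor is $x_0^{k(\gamma_i-\beta_i-\alpha_i-1)}x_1^{k\alpha_i}$, which is identically $1$ when $\alpha_i=0$ and $\gamma_i=\beta_i+\alpha_i+1$; in that subcase the only remaining factor is $x_2^{k\beta_i}$, so one needs $\beta_i>0$ (true for $i\geq4$, since otherwise $m_i=m_{i-1}$) and one must first derive $x_2=0$ from $F_2=0$ --- which is possible precisely because $\beta_3=0$ excludes the bad subcase for $i=3$. Without this ordered, case-by-case divisibility analysis, the claim that the common zero with $p_0=0$ is only $(0,\dots,0,1)$ is unsupported.
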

\begin{proof} It is clear that $F_1 \in I(C)$. Together with Lemma \ref{lem2}, this reveals that $C$ lies on the hypersurfaces defined by these polynomials. Therefore, it is sufficient to prove that the common zeroes of the system $F_1 =\dots =F_{n-1}=0$ lies in $C$.

If $x_0 = 0$, $F_1=0$ yields $x_1 = 0$, and in this case
we first prove that $x_2 =\dots =x_{n-1}=0$ by $F_2 =\dots =F_{n-1}=0$. It follows easily from \eqref{eqn1} that $h_0 >0$, when $c_i \geq 1$, as otherwise we would get $c_i=1$, $b_i=a_i=0$ and thus $m_i=m_{i-1}$. Assume now that $c_i=0$. If $a_i=0$ and $b_i>0$, then $h_0>0$ by \eqref{eqn4}. If $a_i>0$ and $b_i>0$, $a_i+b_i-1 > (m_{i-1}-m_1)$ by \eqref{eqn3} and hence $h_0>0$. If $a_i>0$ and $b_i=0$, $h_0>0$ except when $a_i=k=N_i+1=m_{i-1}-m_1+1$ and $m_1 >1$ in which case $x_1$ divides the monomial in $H_{i-1}$ corresponding to $k=N_i+1$. In any case, $H_{i-1}=0$ whenever $F_{i-1}=0$ and $x_0=x_1=0$. Similarly, $G_{i-1}=0$ whenever $F_{i-1}=0$ and $x_0=x_1=0$, under the assumption $\alpha>0$ or $\gamma_i-\beta_i-\alpha_i-1>0$. So, assume that $\alpha_i=0$ and $\gamma_i-\beta_i-\alpha_i-1=0$ for every $3\leq i\leq n$. Then, $\beta >0$ for $4\leq i\leq n$ as otherwise we would get $\gamma_i=1$ and $m_i=m_{i-1}$. Since $\beta_3=0$, $\alpha_3=0$ and $\gamma_3-\beta_3-\alpha_3-1=0$ can not occur. So, $G_2=0$ whenever $F_{2}=0$ and $x_0=x_1=0$, which together with $H_2=0$ implies that $x_2=0$. On the other hand, $x_2$ divides $G_{i-1}$ when $\beta_i>0$, for every $4\leq i\leq n$. Therefore, in any case $G_{i-1}=0$ whenever $F_{i-1}=0$ and $x_0=x_1=0$, for every $4\leq i\leq n$. These prove our claim that $x_2 =\dots =x_{n-1}=0$ by $F_2 =\dots =F_{n-1}=0$. Thus, the common solution is just the point $(0,\ldots,0,1)$ which
lies on $C$.

On the other hand, we can
set $x_0=1$ when $x_0 \neq 0$. Therefore, it is sufficient to show
that the only common solution of these equations is $x_i=t^{m_i}$,
for some $t \in K$ and for all $1\leq i\leq n$, which we prove by
induction on $i$. More precisely, we show that if
$F_{i-1}(1,x_1,\dots,x_n)=0$, and $x_1 = t^{m_1}$, \ldots,
$x_{i-1} = t^{m_{i-1}}$ then $x_i=t^{m_i}$, for all $2\leq i\leq n$.
Clearly $m_i=c_im_{i-1}+b_im_2+a_im_1$ implies
$\textrm{gcd}(m_1,\ldots,m_{i-1})=1$ for all $3\leq i\leq n$. In
particular, $\textrm{gcd}(m_1,m_2)=1$, which means that there are
integers $\ell_1,\ell_2$ such that $\ell_1$ is positive and
$\ell_1m_2+\ell_2m_1=1$. From the first equation $F_1=0$, $x_1^{m_2}
= x_2^{m_1}$. Letting $x_1=T^{m_1}$, we get $x_2= \varepsilon
T^{m_2}$, where $\varepsilon$ is an $m_1$-st root of unity. Setting
$t=\varepsilon^{\ell_1} T$, we get $x_1 = t^{m_1}$ and
$x_2=t^{m_2}$, which completes the base statement for the induction.

Now, we assume that $x_0 = 1,\ x_1 = t^{m_1},\ldots, x_{i-1} =
t^{m_{i-1}}$ for some $3\leq i\leq n$. Substituting these to the
equation $F_{i-1}=0$, we get\\
$\displaystyle 0=(t^{m_{i-1}})^{m_i}+\sum_{k=1}^{N_i}(-1)^k
{m_{i-1}\choose k}
(t^{m_1})^{k\alpha_i}(t^{m_2})^{k\beta_i}(t^{m_{i-1}})^{m_i-k\gamma_i}x_i^k$

$$+ \sum_{k=N_i+1}^{m_{i-1}}(-1)^k
{m_{i-1}\choose k}
(t^{m_1})^{a_i(m_{i-1}-k)}(t^{m_2})^{b_i(m_{i-1}-k)}(t^{m_{i-1}})^{c_i(m_{i-1}-k)}x_i^k.$$
Since $m_i=\gamma_im_{i-1}-\beta_im_{2}-\alpha_im_1=c_im_{i-1}+b_im_2+a_im_1$,
this is nothing but
$$\overset{m_{i-1}}{\underset{k=0}{\sum}}(-1)^k
                                       \left(
                                          \begin{array}{c}
                                            m_{i-1} \\
                                            k \\
                                          \end{array}
                                        \right)
                                           (t^{m_i})^{m_{i-1}-k}x_i^{k}=(t^{m_i}-x_i)^{m_{i-1}}=0.$$
Hence $x_i=t^{m_i}$ completing the proof.
\end{proof}

\begin{example} Consider the monomial curve  $C(1,2,3,m_4)\subset \mathbb{P}^4$, where $m_4>3$. Note that $m_1\geq\beta_i(m_2-m_1)$ is satisfied if and only if $\beta_i \in \{0,1\}$. Clearly, we have $3=1\cdot2+1\cdot1=2\cdot2-1\cdot1$ so $\beta_3=0$ and $\gamma_3=\beta_3+\alpha_3+1$. By Theorem \ref{dl1}, the rational normal curve $C(1,2,3)\subset \mathbb{P}^3$ is a set theoretic complete intersection on $F_1, F_2$, where
 \begin{eqnarray}
  \nonumber      F_1&=&x_1^2-x_0x_2, \\
  \nonumber      F_2&=&x_2^3-2x_1x_2x_3+x_3^2x_0.
 \end{eqnarray}
Either $m_4=3c_4$, for some integer $c_4>1$; or for some positive integer $c_4$, we have $m_4=3c_4+1$ or $m_4=3c_4+2$. If $m_4=3c_4$, then $m_4=c_4\cdot3-0\cdot2-0\cdot1$, so $\beta_4=0$ and $\gamma_4\geq\beta_4 + \alpha_4+1 $ for every $c_4>1$. So, $C(1,2,3,m_4)\subset \mathbb{P}^4$ is a set theoretic complete intersection on $F_1, F_2,F_3$, where $F_3$ is as follows
$$x_3^{3c_4}-3x_3^{2c_4}x_4x_0^{c_4-1}+3x_3^{c_4}x_4^2x_0^{2c_4-2}-x_4^3x_0^{3c_4-3}, ~ \text{if}~ m_4=3c_4 \geq6.$$
 When $m_4=3c_4+1$, then $m_4=(c_4+1)\cdot3-0\cdot2-2\cdot1$, so $\beta_4=0$ and the condition $\gamma_4\geq\beta_4 + \alpha_4+1 $ is satisfied for every $c_4>0$ except $c_4=1$. So, $C(1,2,3,m_4)\subset \mathbb{P}^4$ is a set theoretic complete intersection on $F_1, F_2,F_3$, where $F_3$ is as follows
$$x_3^{3c_4+1}-3x_1^2x_3^{2c_4}x_4x_0^{c_4-2}+3x_1^4x_3^{c_4-1}x_4^2x_0^{2c_4-4}-x_4^3x_0^{3c_4-2}, ~\text{if} ~m_4=3c_4+1 \geq7.$$
For the exception $m_4=4$, we see that $4=0\cdot 3+2\cdot2+0\cdot1=2\cdot 3-1\cdot2-0\cdot1$ so $\beta_4=1$ and $\gamma_4-\beta_4-\alpha_4-1=0.$
Hence, $C(1,2,3,4)\subset \mathbb{P}^4$ is a set theoretic complete intersection on $F_1, F_2,F_3$, where
 $$ F_3=x_3^4-3x_2x_3^2x_4+3x_2^2x_4^2-x_4^3x_0.$$
Finally, if $m_4=3c_4+2$, then $m_4=(c_4+2)\cdot3-2\cdot2-0\cdot1$ and the condition $\gamma_4\geq\beta_4 + \alpha_4+1 $ is satisfied for every $c_4>0$ but $\beta_4=2$ meaning that $m_1\geq\beta_4(m_2-m_1)$ is not satisfied. The latter condition was just to make sure that the power of $x_{i-1}$ in $G_{i-1}$ is non-negative. Since,
$$F_3=x_3^{3c_4+2}-3x_2^2x_3^{2c_4}x_4x_0^{c_4-1}+3x_2^4x_3^{c_4-2}x_4^2x_0^{2c_4-2}-x_4^3x_0^{3c_4-1}$$ and the powers of $x_3$ are non-negative for every $c_4\geq 2$ and $F_3$ is clearly a polynomial. Thus, Theorem \ref{dl1} still applies and $C(1,2,3,4)\subset \mathbb{P}^4$ is a set theoretic complete intersection on $F_1, F_2,F_3$ if $m_4=3c_4+2 \geq8.$
For the exceptional case where $m_4=5$, we have $5=3\cdot 3-2\cdot2-0\cdot1=2\cdot 3-0\cdot2-1\cdot1$ so in both cases $\gamma_4-\beta_4-\alpha_4-1=0.$ But, in the first case $\beta_4=2$ and in the second case $\beta_4=0$ and we can apply Theorem \ref{dl1} with the second presentation. Therefore, $C(1,2,3,5)\subset \mathbb{P}^4$ is a set theoretic complete intersection on $F_1, F_2,F_3$, where
 $$ F_3=x_3^5-3x_1x_3^3x_4+3x_1^2x_3x_4^2-x_4^3x_0^2.$$

\end{example}

\begin{remark} In \cite{eto}, Eto studies necessary and sufficient conditions under which a monomial curve is a set theoretic complete intersection on $n-2$ binomials and one polynomial. In contrast, our curves are set theoretic complete intersections on one binomial $F_1$ and $n-2$ polynomials $F_2,\dots,F_{n-1}$ with more than two monomials.
\end{remark}

\begin{remark} Only when $\beta_i=0$ and $m_1=1$, Theorem \ref{dl1} is a special case of Theorem $2.1$ in \cite{N} but as long as $\beta_i>0$ or $m_1>1$ it improves upon the condition that $m_i$ must satisfy, for $i=3,\dots,n$. Namely, Theorem $2.1$ in \cite{N} requires for $m_i=\gamma_im_{i-1}-\beta_im_{2}-\alpha_im_1$ that $\gamma_i\geq\beta_i m_2+ \alpha_i m_1$ if $m_1>1$  and $\gamma_i\geq\beta_im_2 + \alpha_i+1 $ if $m_1=1$ whereas our main result needs only $\gamma_i\geq\beta_i + \alpha_i+1 $. It is an improvement also of Theorem $5.8$ in \cite{PS} in that starting from a monomial curve in $\mathbb{P}^2$ our main result can produce infinitely many new examples in $\mathbb{P}^n$ for every $n\geq3$ whereas Theorem $5.8$ in \cite{PS} can only produce them for $n=3$. Finally, Theorem 3.4 in \cite{OT} implies Theorem \ref{dl1} but its proof is not as elemantary as our proof and does not give the equations cutting out the curves.
\end{remark}

The following consequence, which illustrates the strength of our main theorem, can be proved by imitating the proof of Proposition $2.4$ in \cite{N}.
\begin{proposition}\label{hq1}
If $\textrm{gcd}(m_{i-1},m_1)=1$ and $m_i \geq
\textrm{max}\{m_{i-1}m_1,\ m_{i-1}(m_{i-1}-m_1)\}$, for all $3\leq i \leq n$, then the monomial curve $C(m_1,m_2,\dots,m_n)$ in
$\mathbb{P}^n$ is a set-theoretic complete intersection.
\end{proposition}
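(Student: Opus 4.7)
The plan is to exhibit, for each $i$, the presentation $m_i = c_i m_{i-1} + a_i m_1$ (i.e.\ take $b_i=0$ in Lemma \ref{lem1}) and then verify both hypotheses of Theorem \ref{dl1}. Since $\gcd(m_{i-1},m_1)=1$, there is a unique $a_i \in \{0,1,\dots,m_{i-1}-1\}$ with $m_i \equiv a_i m_1 \pmod{m_{i-1}}$; setting $c_i := (m_i - a_i m_1)/m_{i-1}$ yields a non-negative integer because $a_i m_1 \leq (m_{i-1}-1)m_1 < m_{i-1}m_1 \leq m_i$. In particular $m_i \in \langle m_1, m_{i-1}\rangle$, so $C$ is a recursive extension. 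With $b_i=0$, the formulas in Lemma \ref{lem1} force $\beta_i = 0$, which makes the hypothesis $m_1 \geq \beta_i(m_2-m_1)$ trivial.

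For the remaining condition $\gamma_i - \beta_i - \alpha_i - 1 \geq 0$, I would split on $a_i$. When $a_i=0$ we have $\alpha_i=0$ and $\gamma_i=c_i$, so the condition reads $c_i \geq 1$, which is clear since $m_i \geq m_{i-1}m_1 \geq m_{i-1}$. When $a_i \geq 1$ we have $\alpha_i = m_{i-1}-a_i$ and $\gamma_i = c_i+m_1$, and the condition becomes $c_i \geq m_{i-1}-a_i-m_1+1$. This is automatic whenever $a_i \geq m_{i-1}-m_1+1$, because then the right hand side is non-positive while $c_i \geq 0$.

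The crux of the argument is the band $1 \leq a_i \leq m_{i-1}-m_1$. Here the hypothesis $m_i \geq m_{i-1}(m_{i-1}-m_1)$ produces
$$c_i = \frac{m_i - a_i m_1}{m_{i-1}} \geq (m_{i-1}-m_1) - \frac{a_i m_1}{m_{i-1}}.$$
Since $m_1 < m_{i-1}$, we have $a_i m_1/m_{i-1} < a_i$, so the right hand side strictly exceeds $m_{i-1}-m_1-a_i$; integrality of $c_i$ then upgrades this to $c_i \geq m_{i-1}-m_1-a_i+1$, closing the case. With both hypotheses of Theorem \ref{dl1} verified for every $3 \leq i \leq n$, the conclusion follows immediately.
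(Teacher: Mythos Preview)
Your proof is correct and follows the same strategy as the paper's: take $\beta_i=0$ and verify the two hypotheses of Theorem~\ref{dl1}. The only cosmetic difference is that you first build the positive representation $m_i=c_im_{i-1}+a_im_1$ of Lemma~\ref{lem1}\,(I) via the residue $a_i\equiv m_i m_1^{-1}\pmod{m_{i-1}}$ and then convert to $(\alpha_i,\gamma_i)$ with the formulas there, whereas the paper starts from an arbitrary relation $m_i=A_im_{i-1}-B_im_1$ and shifts by a suitable integer $\theta_i$ to land in the canonical range; the resulting $\alpha_i,\gamma_i$ and the inequalities checked are the same.
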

\begin{proof}
From the condition $\textrm{gcd}(m_{i-1},m_1) = 1$, there exist positive
integers $A_i$ and $B_i$ such that $m_i= A_im_{i-1}-B_im_1$. Since $m_i\geq m_{i-1}(m_{i-1}-m_1)$, we have $A_im_{i-1}-B_im_1 \geq
m_{i-1}(m_{i-1}-m_1)$. Subtracting $B_im_{i-1}$ from both hand sides
and rearranging, we obtain the following $$A_im_{i-1}-B_im_{i-1} \geq
-B_im_{i-1}+B_im_1+m_{i-1}(m_{i-1}-m_1).$$ Dividing both hand sides
by $m_{i-1}(m_{i-1}-m_1)$ yields
$$\dfrac{A_i-B_i}{m_{i-1}-m_1}\geq-\dfrac{B_i}{m_{i-1}}+1.$$ On the
other hand, the hypothesis $m_i\geq m_{i-1}m_1$ yields
$$\dfrac{A_i}{m_{1}}-\dfrac{B_i}{m_{i-1}}=\dfrac{m_i}{m_{i-1}m_{1}}\geq 1,\quad \mbox{which ~~implies}~~-\dfrac{B_i}{m_{i-1}}\geq\dfrac{m_1-A_i}{m_1}.$$  Therefore, we can
choose positive integers $\theta_i$ satisfying the condition
$$\dfrac{A_i-B_i}{m_{i-1}-m_1}\geq\dfrac{-B_i}{m_{i-1}}+1>\theta_i \geq-\dfrac{B_i}{m_{i-1}}\geq\dfrac{m_1-A_i}{m_1}.$$
Then we can set $\gamma_i= A_i+m_1\theta_i$ and $\alpha_i=
B_i+m_{i-1}\theta_i$ so that $$m_i = \gamma_i m_{i-1} - \alpha_i m_1 ~\mbox{where}~m_{i-1}>\alpha_i \geq 0, \gamma_i\geq m_1, \gamma_i-\alpha_i-1\geq 0.$$
Since $\beta_i =0$, the condition $m_1 \geq \beta_i(m_2-m_1)$ holds and it follows from Theorem \ref{dl1} that the
monomial curve $C(m_1,m_2,\ldots,m_n)$ is a set-theoretic complete
intersection.
\end{proof}

\section{Finding the Equations} In this section, we briefly explain how we find the equations cutting out the set theoretic complete intersections. We work within the most general set up but explain how we construct the polynomial $F_n$ for a fixed $n\geq 2$. Assume that $$ m_{n+1}=\beta m_{n}-\sum_{i=1}^{n-1} \alpha_i  m_i=\sum_{i=1}^{n} a_i m_i$$ for some non-negative $a_i$ and $\alpha_i$. These give us two homogeneous binomials:

$$(x_{n}^{\beta}-x_0^{\Delta}x_1^{\alpha_1} \cdots x_{n-1}^{\alpha_{n-1}}x_{n+1})^{m_n}$$   and

$$(x_1^{a_1} \cdots x_n^{a_n} -x_0^{\delta} x_{n+1})^{m_n}.$$

As in the proof of Theorem \ref{dl1} when we substitute $x_0 = 1,\ x_1 = t^{m_1},\ldots, x_{n} =t^{m_{n}}$, in our equation $F_{n}=0$, we would like to end up with $(t^{m_{n+1}}-x_{n+1})^{m_{n}}=0$. If we do the substituation in the first binomial we get $(t^{\beta m_{n}}-t^{\sum_{i=1}^{n-1}  \alpha_i m_i}x_{n+1})^{m_{n}}=0$ instead. To resolve this we divide the first polynomial by $x_n^{\sum_{i=1}^{n-1}  \alpha_i m_i}$ and to get the same degree in the monomials of both expressions we divide the second binomial by $x_0^{\sum_{i=1}^{n-1}a_i(m_n-m_i)}$. But some monomials will have negative powers and these two expressions are not polynomials anymore. If there exist an integer $N$ with $1<N<m_n$ such that $m_{n+1} \geq \beta k$ for $1\leq k \leq N$ and $m_{n+1} \geq k+ \sum_{i=1}^{n}a_i(m_n-k)$ for $N+1 \leq k \leq m_n$, then we can make up a polynomial $F_n$ by taking the first $N+1$ monomials from the first expression and by taking the rest from the second one. This explains why we restrict ourself in the main theorem. Let us illustrate this by an example:

Consider the rational normal curve $C=C(1,2,\dots,n,n+1) \subseteq \mathbb{P}^{n+1}$. We have $$n+1=2 \cdot n-1\cdot (n-1)=1\cdot n+1\cdot 1.$$
These give us the following binomials:
$$(x_{n}^{2}-x_{n-1}x_{n+1})^{n}=x_n^{2n}+\sum_{k=1}^{n}(-1)^k  \left(
                                          \begin{array}{c}
                                            n \\
                                            k \\
                                          \end{array}
                                        \right) x_n^{2n-2k}x_{n-1}^kx_{n+1}^k \quad \mbox{and}$$
$$(x_1x_{n}-x_{0}x_{n+1})^{n}=\sum_{k=0}^{n-1}(-1)^k  \left(
                                          \begin{array}{c}
                                            n \\
                                            k \\
                                          \end{array}
                                        \right) x_1^{n-k}x_n^{n-k}x_{0}^kx_{n+1}^k +(-1)^nx_0^{n}x_{n+1}^{n}.$$
Dividing the first one by $x_n^{n-1}$ and the second one by $x_0^{n-1}$, we get the following:
$$x_n^{n+1}+\sum_{k=1}^{n}(-1)^k  \left(
                                          \begin{array}{c}
                                            n \\
                                            k \\
                                          \end{array}
                                        \right) x_n^{n+1-2k}x_{n-1}^kx_{n+1}^k \quad \mbox{and}$$
$$\sum_{k=0}^{n-1}(-1)^k  \left(
                                          \begin{array}{c}
                                            n \\
                                            k \\
                                          \end{array}
                                        \right) x_1^{n-k}x_n^{n-k}x_{0}^{k-n+1}x_{n+1}^k +(-1)^nx_0x_{n+1}^{n}.$$
It is now clear that $x_n^{n+1-2k}$ is no longer a monomial for $k$ satisfying $2k>n+1$ in the first expression and $x_{0}^{k-n+1}$ defines a monomial only for the last two terms in the second expression. Thus, if we take $N=n-2$ and replace the last two terms of the first expression with the last two monomials, we get the following expression:
$$F_{n}=x_n^{n+1}+\sum_{k=1}^{N}(-1)^k  \left(
                                          \begin{array}{c}
                                            n \\
                                            k \\
                                          \end{array}
                                        \right) x_n^{n+1-2k}x_{n-1}^kx_{n+1}^k+(-1)^{n-1}n x_1x_nx_{n+1}^{n-1} +(-1)^nx_0x_{n+1}^{n}.$$
Note that this is a polynomial if and only if $n+1-2k \geq 0$ for all $1\leq k \leq N=n-2$, which holds if and only if $n\leq5$.

\bibliographystyle{amsplain}

\end{document}